\newtheorem{thm}{Theorem}
\newtheorem{cor}{Corollary}
\newtheorem{lem}{Lemma}
\newtheorem{conj}{Conjecture}
\newtheorem{prob}{Problem}
\theoremstyle{definition}
\newtheorem{defn}{Definition}[section]
\newtheorem{example}{Example}
\newtheorem{remark}{Remark}
\newenvironment{pf}[1][]{%
 \vskip 1mm
 \noindent
 \ifthenelse{\equal{#1}{}}%
  {{\slshape Proof. }}%
  {{\slshape #1.} }%
 }%
{\qed\medskip}
\newcounter{alphabet}
\newcounter{tmp}
\newenvironment{Thm}[1][]{\refstepcounter{alphabet}%
\bigskip%
\noindent%
{\bf Theorem \Alph{alphabet}}%
\ifthenelse{\equal{#1}{}}{}{ (#1)}%
{\bf .} \itshape}{\vskip 8pt}
\newcommand{\Ref}[1]{\@ifundefined{r@#1}{}{\setcounter{tmp}{\ref{#1}}\Alph{tmp}}}
\newcommand{\IR}{{\mathbb R}}
\newcommand{\IC}{{\mathbb C}}
\newcommand{\ID}{{\mathbb D}}
\newcommand{\real}{{\operatorname{Re}\,}}
\def\be{\begin{equation}}
\def\ee{\end{equation}}
\newcommand{\bee}{\begin{enumerate}}
\newcommand{\eee}{\end{enumerate}}
\newcommand{\blem}{\begin{lem}}
\newcommand{\elem}{\end{lem}}
\newcommand{\bthm}{\begin{thm}}
\newcommand{\ethm}{\end{thm}}
\newcommand{\bcor}{\begin{cor}}
\newcommand{\ecor}{\end{cor}}
\newcommand{\beg}{\begin{example}}
\newcommand{\eeg}{\end{example}}
\newcommand{\begs}{\begin{examples}}
\newcommand{\eegs}{\end{examples}}
\newcommand{\bdefe}{\begin{defn}}
\newcommand{\edefe}{\end{defn}}
\newcommand{\bprob}{\begin{prob}}
\newcommand{\eprob}{\end{prob}}
\newcommand{\bques}{\begin{ques}}
\newcommand{\eques}{\end{ques}}
\newcommand{\bei}{\begin{itemize}}
\newcommand{\eei}{\end{itemize}}
\newcommand{\bcon}{\begin{conj}}
\newcommand{\econ}{\end{conj}}
\newcommand{\bcons}{\begin{conjs}}
\newcommand{\econs}{\end{conjs}}
\newcommand{\bprop}{\begin{propo}}
\newcommand{\eprop}{\end{propo}}
\newcommand{\br}{\begin{remark}}
\newcommand{\er}{\end{remark}}
\newcommand{\brs}{\begin{rems}}
\newcommand{\ers}{\end{rems}}
\newcommand{\bo}{\begin{obser}}
\newcommand{\eo}{\end{obser}}
\newcommand{\bos}{\begin{obsers}}
\newcommand{\eos}{\end{obsers}}
\newcommand{\bpf}{\begin{pf}}
\newcommand{\epf}{\end{pf}}
\newcommand{\ba}{\begin{array}}
\newcommand{\ea}{\end{array}}
\newcommand{\beq}{\begin{eqnarray}}
\newcommand{\beqq}{\begin{eqnarray*}}
\newcommand{\eeq}{\end{eqnarray}}
\newcommand{\eeqq}{\end{eqnarray*}}
\begin{document}
\bibliographystyle{amsplain}
\title[Univalence criterion for harmonic mappings and $\Phi$-like functions]{Univalence criterion for harmonic mappings and $\Phi$-like functions}

\author{Sergey Yu. Graf, Saminathan Ponnusamy, and Victor V. Starkov}

\address{S. Yu. Graf,
Department of Mathematics, Tver State University, ul. Zhelyabova 33, Tver, 170000, Russia.
}
\email{sergey.graf@tversu.ru}

\address{S. Ponnusamy,
Indian Statistical Institute (ISI), Chennai Centre, SETS (Society
for Electronic Transactions and Security), MGR Knowledge City, CIT
Campus, Taramani, Chennai 600 113, India.
}
\email{samy@isichennai.res.in, samy@iitm.ac.in}

\address{V. V. Starkov,
Department of Mathematics, University of Petrozavodsk,
ul. Lenina 33, Petrozavodsk, 185910 , Russia
}
\email{vstarv@list.ru }

\subjclass[2000]{Primary: 30C62, 31A05; Secondary: 30C45, 30C75}
\keywords{Univalent harmonic mappings, $\Phi$-like mappings, linear and affine invariant families, starlike, convex and close-to-convex
mappings, and distortion and covering theorems.
}

\begin{abstract}
In this paper, we obtain a new characterization for univalent harmonic mappings and obtain a structural formula for the associated function
which defines the analytic $\Phi$-like functions in the unit disk. The new criterion stated in this article for the injectivity of harmonic mappings
implies the well-known results of Kas'yanyuk \cite{Kas59} and Brickman \cite{Brick73} for analytic functions, but with a simpler
proof than theirs.  A number of consequences of the characterization, and examples are also presented.
Further investigation provides a new method to construct univalent harmonic mappings with the help of an improved distortion theorem.
\end{abstract}

\thanks{}

\maketitle
\pagestyle{myheadings}
\markboth{S. Yu. Graf, S. Ponnusamy, and V. V. Starkov}{Univalence criterion for harmonic mappings}

\section{Introduction}
The article is devoted to the investigation of complex-valued harmonic functions defined on a simply connected domain $D$
of the complex plane $\IC$. Here we say that $f$ is harmonic in $D$ if the real and imaginary parts of $f$
satisfy the Laplace  equation.
Evidently, $f$ is harmonic on $D$ if and only if it has a decomposition $f=h+\overline{g}$, where $h$ and $g$ are analytic on $D$. Here
$h$ and $g$ are called analytic and co-analytic parts of $f$, respectively.
In function theoretic point of view, the study of univalent harmonic mappings along with geometric subfamilies was pioneered in 1984 by
Clunie and Sheil-Small \cite{Clunie-Small-84}. In recent years, this topic has received the attention of many and the literature is now vast
(see the monograph \cite{Duren:Harmonic}, and also the recent expository article by Ponnusamy and Rasila \cite{PonRasi2013}).
Recently, some of the results from conformal case has been generalized to the case of planar harmonic mappings and also
to the case of functions of several variables. However, some others have no counterparts and thus have many challenging problems and
conjectures remain unsolved (see for example, \cite{BL}). Nevertheless, the analogy to the theory of conformal mappings is far from obvious
and the family of univalent harmonic mappings is much larger than its analytic counterparts.

The main goal in this article is to obtain criterion for the univalence of harmonic mappings and related results. In the particular case, this criterion leads to the known concept of the so-called $\Phi$-like analytic functions.

Let $\mathcal{A}(a)$ denote the set of functions $f$ analytic in the unit disk $\ID=\{z \in \IC:\, |z|<1\}$ with the form
$$f(z)=a+\sum_{n=1}^\infty a_nz^n,\quad f'(0)\neq0.
$$
In particular, let $\mathcal{A}=\{f\in \mathcal{A}(0):\, f'(0)=1\}$ and $\mathcal{S}=\{f\in \mathcal{A}:\, ~\mbox{$f$ is univalent in $\ID$}\}$.
Denote by  $\mathcal{S}^\star$, the subfamily of functions $f\in \mathcal{S}$ such that $f(\ID)$ are starlike with respect to  the origin.
Recall that function $f\in \mathcal{A}$ is called $\Phi$-like (in $\ID$) if and only if
\be\label{GPS2-eq2d}
{\rm Re}\left (\frac{zf'(z)}{\Phi(f(z))}\right )>0,\quad z\in\ID,
\ee
where $\Phi$ is analytic on $f(\ID)$, $\Phi(0)=0$, and ${\rm Re}\,\Phi'(0)>0.$
The concept of $\Phi$-like functions was introduced by Kas'yanyuk \cite{Kas59} in 1959 and
independently in 1973  by Brickman \cite{Brick73}.
The reader is referred to \cite {AvkAks75,Hotta13,Rus76} for related investigations about $\Phi$-like functions.
Surprisingly,  every $\Phi$-like function is univalent in $\ID$ and, every $f\in \mathcal{S}$ is $\Phi$-like for some $\Phi$.
 Evidently, $f\in \mathcal{S}^\star$ is a special case of $\Phi$-like function with $\Phi(w)=w$;
and $f\in \mathcal{S}_p$ is a special case with $\Phi(w)=e^{i\alpha}w$ and $\alpha\in (-\pi/2,\pi/2)$.
Here $\mathcal{S}_p$ represents the family of all spiral-like functions $f\in \mathcal{A}$;
i.e.,  for each $f\in\mathcal{S}_p$ there exists an $\alpha\in (-\pi/2,\pi/2)$ such that
$${\rm Re}\left (e^{-i\alpha}\frac{zf'(z)}{f(z)}\right )>0,\quad z\in\ID.
$$


The paper is organized as follows. A complete characterization of univalent harmonic mappings is presented in Section  \ref{GPS2-sec2} and
we use this condition to obtain a number of consequences of it. In Theorem \ref{GPS2-th2}, we establish a structural formula for analytic univalent $\Phi$-like mappings of the unit disk. In Section \ref{GPS2-sec3}, we obtain an improved distortion theorem (Lemma \ref{GPS2-th2n1}) and present a method of construction of univalent harmonic mappings (Theorem \ref{GPS2-th3}).

\section{Main results and proofs}\label{GPS2-sec2}
Now, we recall the two recent results which provide sufficient conditions for a harmonic function to be close-to-convex (univalent)
in $\ID$. A harmonic function $f$ defined on $\ID$ is called convex (resp. {\it close-to-convex}) if it is univalent in $\ID$ and  $f(\ID)$ is a convex (resp.  {\it close-to-convex}) domain. Recall that a domain $D\subset\mathbb{C}$ is called
close-to-convex if its complement $\mathbb C\setminus D$ can be written as an union of rays that can intersect only at their end points.
We say that a harmonic function $f=h+\overline{g}$ on $\ID$ is normalized, denoted by $f\in\mathcal{H}$, if  $h(0) = g(0) = 0$ and $h'(0) = 1$.
A harmonic function $f=h+\overline{g}$ on $\ID$ is called sense-preserving if the Jacobian $J_f(z)=|h'(z)|^2 -|g'(z)|^2$ is positive in $\ID$.


\begin{Thm}
\label{uni-theo1}
{\rm \cite{Hiroshi-Samy-2010}}
Suppose $f=h+\overline{g}$ is a harmonic mapping in a convex domain $\Omega$ such that
${\rm Re\,} (e^{i\gamma}h'(z))>|g'(z)|$ for all $z\in \Omega$, and for some $\gamma \in \mathbb{R}$.
Then $f$ is close-to-convex and univalent in $\Omega$.
\end{Thm}

Later this result has been generalized  in \cite{PonSai-1(11)} as follows.

\begin{Thm}\label{APS1-11-lem1}
Let $f=h+\overline{g}\in\mathcal{H}$. 
Further, let $G$ be univalent, analytic and convex in $\ID$. If $f$ satisfies
\be\label{APS1-11-eq1a}
{\rm Re}\left (e^{i\gamma}\frac{h'(z)} {G'(z)}\right )>\left|\frac{g'(z)}{G'(z)}\right| ~\mbox{ for all $z\in \ID$ and
for some $\gamma$ real},
\ee
then $f$ is sense-preserving univalent and close-to-convex in $\ID$.
\end{Thm}

However, univalency of the harmonic mappings $f=h+\overline{g}$ in Theorem \Ref{APS1-11-lem1} was shown by Mocanu \cite{Mocanu80}.
We refer to \cite{PonSai-1(11),Hiroshi-Samy-2010} for a proof and applications of Theorems \Ref{uni-theo1} and \Ref{APS1-11-lem1}.
By analogy with the known criterion of I.E.~Basilevich for the univalence of analytic functions,
a criterion for the univalence of harmonic functions $f$ in terms of the series of the analytic and the co-analytic parts of $f$
was obtained in \cite{star2014}.

Now, we state  one of our main results -- another criterion for injectivity -- harmonic analog of $\Phi$-like mappings,  and some of its consequences.

\begin{thm}\label{GPS2-th1}
Let $f=h+\overline{g}$ be harmonic on a convex domain $D$ and $\Omega=f(D)$. Then $f$ is univalent in $D$
if and only if there exists a complex-valued function $\phi=\phi(w,\overline{w})$ in $C^1(\Omega)$ and such that for every
$\epsilon  \in \partial \ID$ there exists a real number $\gamma=\gamma (\epsilon )$ satisfying
\be\label{GPS2-eq1}
{\rm Re}\,\big \{e^{i\gamma}\big(\partial\phi(f(z),\overline{f(z)})+\epsilon \overline{\partial}\phi(f(z),\overline{f(z)})\big)\big \}>0 ~\mbox{ for all $z\in D$},
\ee
where $\partial=\frac{\partial}{\partial z}$ and $\overline{\partial}=\frac{\partial}{\partial \overline{z}}$.
\end{thm}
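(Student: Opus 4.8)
The plan is to prove the two implications separately. For ``only if'' it suffices to produce a single admissible $\phi$, and for this I will use Lewy's theorem to invert $f$; for ``if'' I will run a short fundamental‑theorem‑of‑calculus argument along the segment joining two points of $D$, which is exactly where the convexity of $D$ is used.

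\emph{Forward direction.} Assume $f$ is univalent on $D$. Since $f$ is harmonic, Lewy's theorem guarantees that the Jacobian $J_f$ never vanishes on $D$; hence $f$ is a real‑analytic local diffeomorphism, and being globally injective it is a diffeomorphism of $D$ onto the open set $\Omega=f(D)$, with inverse $f^{-1}\in C^1(\Omega)$ (in fact real‑analytic). Choose $\phi(w,\overline w):=f^{-1}(w)$ (the dependence on $\overline w$ is vacuous here). Then the composition $F(z):=\phi(f(z),\overline{f(z)})=f^{-1}(f(z))=z$, so $\partial F\equiv 1$ and $\overline\partial F\equiv 0$ on $D$. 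Taking $\gamma(\epsilon)=0$ for every $\epsilon\in\partial\ID$ we get $\operatorname{Re}\{e^{i\gamma}(\partial F+\epsilon\,\overline\partial F)\}\equiv 1>0$, which is \eqref{GPS2-eq1}. (Any $\phi$ whose composition with $(f,\overline f)$ is an ``appropriately spiralling'' map would do; $f^{-1}$ is simply the cleanest choice.)

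\emph{Converse.} Suppose a $\phi\in C^1(\Omega)$ with the stated property exists, and set $F(z)=\phi(f(z),\overline{f(z)})\in C^1(D)$. Assume, for contradiction, that $f(z_1)=f(z_2)$ for some $z_1\neq z_2$ in $D$. Then also $\overline{f(z_1)}=\overline{f(z_2)}$, whence $F(z_1)=F(z_2)$. By convexity the segment $z(t)=z_1+t(z_2-z_1)$, $t\in[0,1]$, lies in $D$; writing $z_2-z_1=re^{i\theta}$ with $r>0$, the chain rule for Wirtinger derivatives gives
\[
0=F(z_2)-F(z_1)=\int_0^1\frac{d}{dt}F(z(t))\,dt
 = r e^{i\theta}\int_0^1\Big(\partial F(z(t))+e^{-2i\theta}\,\overline\partial F(z(t))\Big)\,dt .
\]
Put $\epsilon_0:=e^{-2i\theta}\in\partial\ID$ and let $\gamma=\gamma(\epsilon_0)$ be the real number supplied by the hypothesis. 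Cancelling the nonzero factor $re^{i\theta}$, multiplying by $e^{i\gamma}$, and taking real parts,
\[
0=\int_0^1\operatorname{Re}\Big\{e^{i\gamma}\big(\partial F(z(t))+\epsilon_0\,\overline\partial F(z(t))\big)\Big\}\,dt>0,
\]
because the integrand is continuous and strictly positive on $[0,1]$ by \eqref{GPS2-eq1}. This contradiction shows $f$ is univalent.

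\emph{Where the difficulty (such as it is) lies.} The statement is meant to have a short proof, so there is no genuine obstacle; the two points requiring care are: (i) in the forward direction one must invoke Lewy's theorem to upgrade mere injectivity of the harmonic map to a $C^1$‑diffeomorphism, so that $f^{-1}$ is a legitimate $C^1(\Omega)$ function; and (ii) in the converse one must keep track of which unit‑modulus number plays the role of $\epsilon$: since $z'(t)=z_2-z_1$ is \emph{constant} along a segment, a single factor $e^{i\theta}$ can be pulled out of the integral, leaving the one coefficient $\epsilon_0=e^{-2i\theta}$ — and it is precisely the availability of the segment that uses convexity of $D$. Note that the liberty to let $\phi$ depend on $\overline w$ is not needed for the equivalence itself; it is retained for the constructions in the later sections.
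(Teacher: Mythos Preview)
Your proof is correct and follows essentially the same approach as the paper's: both use the Noshiro--Warschawski segment integral (with $\epsilon=\overline{z_2-z_1}/(z_2-z_1)$) for the ``if'' direction and take $\phi=f^{-1}$ for the ``only if'' direction. Your explicit appeal to Lewy's theorem to justify $f^{-1}\in C^1(\Omega)$ is a point the paper states without proof, and your contradiction framing of the sufficiency argument is a cosmetic variant of the paper's direct computation that $\Psi(z_2)\neq\Psi(z_1)$.
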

\begin{proof}
To prove the univalency of $f$ under the sufficient condition \eqref{GPS2-eq1}, we shall apply the simple idea that was used by
Noshiro and Warschawski for constructing their well-known sufficient condition of univalence of analytic functions
(cf. \cite[Chapter 2, Theorem 2.16]{Duren:Analytic}).

Let $z_1$ and $z_2$ be two distinct points in $D$.
Then, because $D$ is a convex domain, the line segment $[z_1,z_2]$, parameterized by $z(t) =(1-t)z_1 + tz_2\in D$ for $0\leq t\leq 1$,
lies in $D$, and $z'(t)=z_2-z_1$. Define $\Psi (z)=\phi(f(z),\overline{f(z)})$. It follows that
\beqq
\Psi  (z_2) - \Psi (z_1) &=& \int_{0}^{1} \frac{d}{dt}\Psi (z(t))\,dt \\
&= &(z_2 - z_1)\int_{0}^{1} \left[\Psi _z (z(t)) + \frac{\overline{z_2 - z_1}}{z_2 - z_1}\Psi _{\overline{z}}(z(t))\right ] dt.
\eeqq
Set $\epsilon =(\overline{z_2 - z_1})/(z_2 - z_1)$ and observe that $|\epsilon|=1$. From \eqref{GPS2-eq1} there exists a $\gamma$ depending on $\epsilon$ such that
\begin{eqnarray*}
{\rm Re\,} \left (e^{i\gamma} \frac{\Psi  (z_2) - \Psi (z_1)}{z_2-z_1} \right ) & = &
\int_{0}^{1}  {\rm Re\,} \left (e^{i\gamma}\big(\partial\phi(f(z),\overline{f(z)})+\epsilon \overline{\partial}\phi(f(z),\overline{f(z)})\big)\right )dt> 0
\end{eqnarray*}
which proves the univalency of $\Psi$, i.e.  $\phi$ is univalent on $\Omega$  implying the univalence of $f$ on $D$.

Conversely, suppose that $f$ is univalent in $D$ and  $\Omega=f(D)$. Denote $f^{-1}(w)=\phi(w,\overline{w})$ for $w\in \Omega$. Then $f^{-1}\in C^1(\Omega)$ and
$\phi(f(z),\overline{f(z)})=z$ for $z\in D$. Observe that $\partial \phi(f(z),\overline{f(z)})=1$ and $\overline{\partial}\phi(f(z),\overline{f(z)})=0$,
showing that \eqref{GPS2-eq1} holds for $\gamma =0$, for example. The proof is complete.
\end{proof}

We note that this criterion of injectivity of harmonic mappings implies the known result of Kas'yanyuk \cite{Kas59} and Brickman \cite{Brick73} for analytic case, but the proof stated above is more natural and essentially shorter.

By taking into account proof of the last theorem, it is possible to reformulate the criterion of injectivity in more simple form.

\bcor\label{GPS2-cor1}
Let $f=h+\overline{g}$ be harmonic on a convex domain $D$ and $\Omega=f(D)$.  Function $f$ is univalent in $D$ if and only if there exists a complex-valued function $\phi\in C^1(\Omega)$ such that
\be\label{GPS2-eq2}
{\rm Re}\,\partial\phi(f(z),\overline{f(z)})> \big |\overline{\partial}\phi(f(z),\overline{f(z)})\big |  ~\mbox{ for all $z\in D$}.
\ee
\ecor

We have to note, that criteria from Theorem \ref{GPS2-th1} and Corollary \ref{GPS2-cor1} are not equal since as the sets of functions  $\phi$ used in conditions 
\eqref{GPS2-eq1} and  \eqref{GPS2-eq2} do not coincide.

We observe that univalence part of Theorem \Ref{uni-theo1} follows from Corollary \ref{GPS2-cor1}, and thus, it is natural to determine a condition of the type \eqref{GPS2-eq2} so that the corresponding $f$ is close-to-convex.


In the analytic case (i.e. $f(z)=h(z)$ and $g(z)\equiv 0$, $z\in\ID$), from Theorem \ref{GPS2-th1}, we conclude the following:

\begin{cor}\label{GPS2-cor1b}
Let $f$ be analytic in a convex domain $D$ and $\Omega =f(D)$. Then $f$ is univalent in $D$
if and only if there exists an analytic function  $\phi(w)$ on $\Omega$ such that
\be\label{GPS2-eq3}
{\rm Re}\,\big \{\frac{d}{dz}\big (\phi(f(z) \big ) \big \}={\rm Re}\,\big \{\phi'(f(z))f'(z)  \big \}> 0 ~\mbox{ for all $z\in D$.}
\ee
\end{cor}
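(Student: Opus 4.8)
The plan is to specialize Theorem \ref{GPS2-th1} to the analytic situation, i.e.\ to the case $g\equiv 0$ (so that $f=h+\overline{g}$ becomes the analytic function $f=h$), while choosing the test function $\phi$ from Theorem \ref{GPS2-th1} to be independent of the second variable $\overline{w}$ — that is, genuinely analytic in $w$. One first records the harmless observation that an analytic function $\phi$ on $\Omega$ is in particular an admissible $\phi\in C^1(\Omega)$ in the sense of Theorem \ref{GPS2-th1}, being constant in $\overline{w}$.

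For the sufficiency direction, I would start from an analytic $\phi$ on $\Omega$ satisfying \eqref{GPS2-eq3}, set $\Psi(z)=\phi(f(z),\overline{f(z)})$ as in the proof of Theorem \ref{GPS2-th1}, and compute its Wirtinger derivatives. Since $f$ is analytic we have $\partial_z\overline{f(z)}=\overline{\partial_{\overline z}f(z)}=0$, and since $\phi$ is analytic its $\overline{w}$-derivative vanishes; the chain rule then gives $\partial\Psi(z)=\phi'(f(z))f'(z)$ and $\overline{\partial}\Psi(z)=0$. Consequently, for every $\epsilon\in\partial\ID$ the expression $\partial\Psi(z)+\epsilon\,\overline{\partial}\Psi(z)$ equals $\phi'(f(z))f'(z)$, so \eqref{GPS2-eq1} holds with the single choice $\gamma=0$ for all $\epsilon$, precisely because of \eqref{GPS2-eq3}. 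Theorem \ref{GPS2-th1} then yields that $f$ is univalent in $D$.

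For the necessity direction, if $f$ is analytic and univalent in $D$ then $f^{-1}$ is analytic on $\Omega=f(D)$, and I would simply take $\phi=f^{-1}$. Then $\phi(f(z))=z$, whence $\frac{d}{dz}\phi(f(z))=\phi'(f(z))f'(z)=1$ and \eqref{GPS2-eq3} holds with real part identically equal to $1$. This is the analytic specialization of the construction $\phi=f^{-1}$ already used in the converse part of the proof of Theorem \ref{GPS2-th1}; the only extra ingredient is that the inverse of an analytic univalent map is again analytic.

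The point that deserves the most attention — though it is bookkeeping rather than a genuine difficulty — is the verification that $\overline{\partial}\Psi\equiv 0$ once both $\phi$ and $f$ are analytic. This is what makes the quantity in \eqref{GPS2-eq1} independent of $\epsilon$: the clause ``for every $\epsilon\in\partial\ID$ there is $\gamma(\epsilon)$'' in Theorem \ref{GPS2-th1} collapses to the existence of a single $\gamma$, and absorbing the unimodular factor $e^{i\gamma}$ into $\phi$ removes it entirely, which is why \eqref{GPS2-eq3} carries no $\gamma$. No analysis beyond Theorem \ref{GPS2-th1} and the Wirtinger chain rule is required.
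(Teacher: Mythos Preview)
Your proposal is correct and follows exactly the route the paper intends: the corollary is stated in the paper as a direct specialization of Theorem \ref{GPS2-th1} to the case $g\equiv 0$ with an analytic $\phi$, and you have supplied precisely those details---the vanishing of $\overline{\partial}\Psi$ when both $f$ and $\phi$ are analytic, reducing \eqref{GPS2-eq1} to \eqref{GPS2-eq3} with $\gamma=0$, and the choice $\phi=f^{-1}$ (now analytic) for the converse. Nothing is missing.
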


Corollary \ref{GPS2-cor1b} reduces to the result of  Kas'yanyuk \cite{Kas59} and Brickman \cite{Brick73}. Indeed if, 
in Corollary \ref{GPS2-cor1b}, we introduce $\Phi$ by
$$\Phi (w)=\frac{f^{-1}(w)}{e^{i\gamma}\phi'(w)}, ~w\in \Omega =f(D),
$$
then \eqref{GPS2-eq3} may be rewritten in an equivalent form:
$${\rm Re}\left (\frac{zf'(z)}{\Phi(f(z))}\right )>0,\quad z\in D,
$$
which in the case of $f\in {\mathcal A}$ leads to $\Phi$-like function defined by \eqref{GPS2-eq2d}. That is, we have

\bcor
If $f\in {\mathcal A} $, then $f$ is
univalent in $\ID$ if and only if $f$ is $\Phi$-like for some $\Phi$.
\ecor


The natural  question, that can be asked, is how large the set of functions $\phi$ satisfying condition \eqref{GPS2-eq1} in
Theorem \ref{GPS2-th1} for a given function $f$.
The following theorem gives us a complete characterization of such functions $\phi (w)$ in the analytic case.

\bthm[Structural formula]\label{GPS2-th2}
Let $f$ be analytic and univalent in $\ID$. Then the inequality \eqref{GPS2-eq3} holds for the analytic function
$\phi (w)$ in $f(\ID)$ if and only if
\be\label{GPS2-eq4}
\phi (w)= -c\left[(1+ic_1)f^{-1}(w)+2\int_0^{2\pi}e^{-i\theta}\log(1-f^{-1}(w)e^{i\theta})\,d\mu(\theta)\right]+c_0,
\ee
where $c>0,\,c_1\in\IR,\,c_0\in\IC$ are arbitrary constants, $\mu(\theta)$ is an arbitrary real-valued increasing (in the wide sense) on $[0,2\pi]$ function  of total variation equal to $1$.
\ethm
\begin{proof}
Suppose that the condition \eqref{GPS2-eq3} holds for a given analytic function $f$ on $\ID$ and for some analytic function $\phi$ on $f(\ID)$.
Then $\frac{d}{dz}\phi(f(z))$ can be represented as a power series of the form
$$\frac{d}{dz}\phi(f(z))=a_0+a_1z+ \cdots 
$$
in $\ID$. If we let $a_0=c+ic'$ with $c=\real a_0$, then $c>0$ by the assumption \eqref{GPS2-eq3}. Define $p$ by
$$ p(z):=\frac{1}{c} \frac{d}{dz}\phi(f(z))-i\frac{c'}{c},
$$
Then $p$ is analytic in $\ID$, $p(0)=1$ and $\real p(z)>0$ in $\ID$. Applying the Herglotz representation we obtain that
\be\label{GPS2-eq2a}
p(z)=\int_0^{2\pi}\frac{1+ze^{i\theta}}{1-ze^{i\theta}}\,d\mu(\theta),
\ee
where $\mu(\theta)$ is real-valued function satisfying the condition of Theorem \ref{GPS2-th2}. Then
\be\label{GPS2-eq2b}
\phi(f(z))=c\int_0^zp(s)\,ds+ic'z+c_0=cz\int_0^1p(tz)\,dt+ic'z+c_0,
\ee
where $c_0\in\IC$ is an arbitrary constant. 
Putting  $z=f^{-1}(w)$ in 
\eqref{GPS2-eq2b} for $w\in f(\ID)$, we obtain
\begin{align*}
\phi(w)&=f^{-1}(w)\left[c\int_0^1p(tf^{-1}(w))\,dt+ic'\right]+c_0\\
&=cf^{-1}(w)\left[\int_0^{2\pi}\left(\int_0^1\frac{1+tf^{-1}(w)e^{i\theta}}{1-tf^{-1}(w)e^{i\theta}}
\,dt\right )d\mu(\theta)+i\frac{c'}{c}\right]+c_0\\
&=cf^{-1}(w)\left[\int_0^{2\pi}\left(-1-2\frac{\log(1-f^{-1}(w)e^{i\theta})}{f^{-1}(w)e^{i\theta}}\right )d\mu(\theta)-ic_1\right]+c_0,
\end{align*}
where $c_1$ is real constant. Therefore
$$\phi (w)= -c\left[(1+ic_1)f^{-1}(w)+2\int_0^{2\pi}e^{-i\theta}\log(1-f^{-1}(w)e^{i\theta})\,d\mu(\theta) \right]+c_0.
$$

Conversely, if $f$ is univalent in $\ID$ and function $\phi$ is given by \eqref{GPS2-eq4} for some arbitrary $c>0$, $c_1\in\IR$, $c_0\in\IC$ 
and the function $\mu(\theta)$ satisfying conditions of Theorem \ref{GPS2-th2}, then it is clear that
$$ \frac{d}{dz}\phi(f(z))=-c\left[ 1+ic_1-2\int_{0}^{2\pi}\frac{d\mu(\theta)}{1-ze^{i\theta}}\right]=cp(z)-ic\cdot c_1,
$$
where the function $p(z)$ has the Herglotz representation of the form \eqref{GPS2-eq2a}. Therefore, the condition \eqref{GPS2-eq3} is 
satisfied for $\phi$ and we complete the proof.

\end{proof}


%
%

\section{Construction of univalent harmonic mappings}\label{GPS2-sec3}
Let $\mathcal{S}_H$ denote the family of functions in $\mathcal{H}$ that are univalent in $\ID$, and  $\mathcal{S}_H^0$ denote
the subfamily of functions $f\in\mathcal{S}_H$ with the additional normalization $f_{\overline{z}}(0)=0$.
Thus, each $f=h+\overline{g}\in {\mathcal S}_{H}^{0}$ has the expansion
$$f(z)=z+\sum\limits_{n=2}^{\infty }a_{n}z^{n}+\overline{\sum\limits_{n=2}^{\infty }b_{n}z^{n}}, \quad z\in \ID.
$$
The families ${\mathcal S}_H$ and ${\mathcal S}_{H}^{0}$ are known to be normal with respect to the topology of uniform convergence on
compact subsets of $\ID$, whereas only ${\mathcal S}_{H}^{0}$ is compact.


In order to construct univalent harmonic mappings, we need the following lemma which is indeed an improved version
of a similar result  presented by Starkov \cite{star2014}. We refer to \cite{Duren:Harmonic,Sheil90}, for a detailed description of how the order of the
family $\mathcal{S}_H$ determines the bounds on both the maximum and minimum modulus for functions lying in an affine and linear invariant subfamilies of
$\mathcal{S}_H$.

\blem\label{GPS2-th2n1}
Let $f=h+\overline g\in {\mathcal S}_H^0$. Then for all $z_1,z_2\in\{z:\,|z|= r\}$, we have
$$|f(z_2)-f(z_1)|\ge |z_2-z_1|C(r),
$$
where
$$C(r)=\frac{1}{4\alpha r}\left(\frac{1-r}{1+r}\right)^\alpha\left[1-\left(\frac{1-r}{1+r}\right)^{2\alpha}\right]
$$
which is strictly decreasing on $(0,1)$, and $\alpha={\rm ord}\,{\mathcal S}_H=\sup_{f\in {\mathcal S}_H}|h''(0)/2|.$
\elem
\bpf
Let $f=h+\overline g\in {\mathcal S}_H^0$ and $|z_1|=|z_2|= r<1$. There is nothing to prove if
$z_1=z_2$ and so, we may let $z_1\ne z_2$. Consider a conformal automorphism $\phi (z)$ of the unit disk $\ID$ defined by the
formula
$$\phi (z) =\frac{z+z_1}{1+\overline{z_1}z}, \ |z_1|<1,
$$
so that  $\phi (0)=z_1$ and $\phi(z_0)=z_2$  for $z_0=(z_2-z_1)/(1-\overline{z_1}z_2).$
Then the normalized function $F$ defined by
$$F(z)=\frac{(f\circ\phi) (z) -f(z_1)}{h'(z_1)(1-|z_1|^2)}
$$
belongs to ${\mathcal S}_H $ and $F_{\overline z}(0)=\overline{g'(z_1)}/h'(z_1)$. Because $f=h+\overline g$ is sense-preserving with $g'(0)=0$, from the classical Schwarz lemma applied to the dilatation $\omega (z)=g'(z)/h'(z)$, $\omega :\,\ID\rightarrow \ID$,  it follows
that $|\omega (z)|=|g'(z)/h'(z)|\leq |z|$ for $z\in \ID$ and thus, $|F_{\overline z}(0)|\leq |z_1|$. Consequently,
the transformation $A$, defined by the composition of $F$ with an affine mapping,
$$A(z)=\frac{F(z)-F_{\overline z}(0)\overline{F(z)}}{1-|F_{\overline z}(0)|^2}
$$
belongs to ${\mathcal S}_H^0$. By direct calculations we conclude that
\beqq |A(z_0)|&\le & \frac{|F(z_0)|}{1-|F_{\overline z}(0)|}\\
&=&\frac{|f(z_2)-f(z_1)|}{|h'(z_1)|(1-|z_1|^2)(1-|F_{\overline z}(0)|)}\\
&\le &\frac{|f(z_2)-f(z_1)|}{(|h'(z_1)|-|g'(z_1)|)(1-|z_1|^2)}.
\eeqq
It is known by the work of Sheil-Small \cite{Sheil90} that for  $f\in {\mathcal S}_H$,
$$|h'(z)|-|g'(z)|\ge\frac{(1-|z|)^{\alpha-1}}{(1+|z|)^{\alpha+1}},
$$
where $\alpha={\rm ord}\,{\mathcal S}_H.$ Since $|z_1|= r$, using the last two inequalities, one obtains that
\be\label{GPS2-eq2n1}
|f(z_2)-f(z_1)|\ge |A(z_0)|\frac{(1-r)^{\alpha}}{(1+r)^{\alpha}}.
\ee

The lower estimation for $|A(z_0)|$ follows from the growth theorem for ${\mathcal S}_H^0$ due to Sheil-Small \cite{Sheil90}:
$$ |A(z_0)|\ge\frac{1}{2\alpha}\left[1-\left(\frac{1-|z_0|}{1+|z_0|}\right)^{\alpha}\right]=
\frac{1}{2\alpha}\left[1-\left(\frac{1-|(z_2-z_1)/(1-\overline{z_1}z_2)|}{1+|(z_2-z_1)/(1-\overline{z_1}z_2)|}\right)^{\alpha}\right].
$$
It remains to note that in \cite{star2014} the inequality
$$1-\left(\frac{1-|(z_2-z_1)/(1-\overline{z_1}z_2)|}{1+|(z_2-z_1)/(1-\overline{z_1}z_2)|}\right)^{\alpha}\ge
\frac{1}{2r}|z_2-z_1|\left[1-\left(\frac{1-r}{1+r}\right)^{2\alpha}\right]
$$
was proved for $|z_1|=|z_2|=r$. Applying the last two inequalities in \eqref{GPS2-eq2n1} completes the proof of the desired estimation.

To prove the monotonicity of $C(r)$ note that
$$C\left(\frac{1-x}{1+x}\right)=\frac{1}{4\alpha}\frac{1+x}{1-x}x^\alpha(1+x^\alpha)(1-x^\alpha),
$$
where $(1-x)/(1+x)$ is strictly decreasing on $(0,1)$. So the monotonicity of $C(r)$ will be clear if we shall demonstrate that the function 
$$\psi(x)=\frac{1-x^\alpha}{1-x}
$$ 
is increasing on $(0,1)$. Direct calculation shows that
$$\psi'(x)=\frac{1-\alpha x^{\alpha-1}+(\alpha-1)x^\alpha}{(1-x)^2}>0
$$
for all $x\in(0,1)$ and for each $\alpha\ge 1$ in view of monotonicity of numerator on $(0,1)$. Note that $\alpha={\rm ord}\,{\mathcal S}_H\ge 3$ (cf. \cite{Duren:Harmonic}).
So $\psi(x)$ is increasing and hence, $C(r)$ is decreasing on $(0,1)$. This completes the proof of the lemma.
\epf

\br\label{rem1}
If follows from the proof of Lemma \ref{GPS2-th2n1} that if the function $f$ belongs to $\mathcal{S}$, then the order $\alpha$ in the definition of $C(r)$ is equal to $2$, since  ${\rm ord}\,{\mathcal S}=2$ (cf. \cite{Bib} and \cite[Chapter 2, \S 4]{Golu66}). We note that in the
analytic case, the estimation in Lemma \ref{GPS2-th2n1} is not the best known.
\er

\bcor\label{GPS2-cor2n1}
Let $f=h+\overline g$ be a univalent sense-preserving harmonic mapping of the unit disk $\ID$. Then, for all $z_1,z_2\in\{z\in \ID:\,|z|= r\}$,
$$|f(z_2)-f(z_1)|\ge |z_2-z_1|(|h'(0)|-|g'(0)|)C(r),
$$
where $C(r)$ is as in Lemma \ref{GPS2-th2n1}.
\ecor
\bpf
The desired conclusion follows by considering appropriate normalization for $f=h+\overline g$. Indeed if we consider
$$f_1(z)=\frac{f(z)-f(0)}{h'(0)}  ~\mbox{ and }~ f_2(z)=\frac{f_1(z)-(\overline{g'(0)}/h'(0) )\, \overline{f_1(z)}}{1-|g'(0)/h'(0)|^2}
$$
then, by the assumptions, we have $f_1\in {\mathcal S}_H$ and $f_2\in {\mathcal S}_H^0$.
Thus,  for $|z_1|=|z_2|=r<1$, Lemma \ref{GPS2-th2n1} shows that
$$\frac{|f(z_2)-f(z_1)|}{|h'(0)|-|g'(0)|}=\frac{|f_1(z_2)-f_1(z_1)|}{1-|g'(0)/h'(0)|}\ge |f_2(z_2)-f_2(z_1)|\ge
|z_2-z_1|C(r)
$$
from which the desired conclusion follows.
\epf

\br\label{rem2}
Computing the sharp lower estimation of $|f(z_2)-f(z_1)|$ for univalent harmonic mappings $f$ remains an open question.
\er


\bthm\label{GPS2-th3}
Let $f=h+\overline g$ be a univalent sense-preserving harmonic mapping of the unit disk $\ID$.
Let $m(r)=\displaystyle \min_{|z|\le r}(|h'(z)|-|g'(z)|)$ for $r\in [0,1)$.
Let $\varphi=p+\overline q$ be harmonic in $\ID$ such that $\displaystyle A= \sup_{z\in\ID}\big (|p'(z)|+ |q'(z)|\big)<\infty$.
Then the function $F$ defined by
$$F(z)=f(rz)+\varepsilon \varphi(z)
$$
is univalent and sense-preserving in $\ID$ for all $r\in (0,1)$ and for all $\varepsilon$ with
$$0\leq \varepsilon<\frac{r}{A}\min\left\{m(r),m(0)C(r) \right\},
$$
where $C(r)$ is defined as in Lemma \ref{GPS2-th2n1}.
\ethm \bpf
Let $r\in(0,1)$ be fixed. First we prove that $F$ is locally univalent in $\ID$. We begin to observe that
\beqq
|F_z(z)|-|F_{\overline z}(z)|&=&|r h'(rz)+\varepsilon p'(z)|-|r g'(rz)+\varepsilon q'(z)|\\
&\ge& r(|h'(rz)|-|g'(rz)|)-\varepsilon(|p'(z)|+|q'(z)|)\\
&\ge & r m(r)-\varepsilon A>0
\eeqq
for all $\varepsilon<r m(r)/A$. Next, we fix $\rho\in (0,1)$ and show that the function $F$ maps the circle $\gamma_\rho =\{z:\,|z|=\rho\}$
univalently onto a simple closed curve.

In view of Corollary \ref{GPS2-cor2n1} and the univalence of $f$, it follows that
$$
|f(r z_2)-f(r z_1)|\ge r|z_2-z_1|(|h'(0)|-|g'(0)|)C(\rho r)=r|z_2-z_1|m(0)C(\rho r)
$$
for all $z_1,z_2\in\gamma_\rho$. On the other hand for all $z_1,z_2\in\ID$,
\beqq
|\varphi(z_2)-\varphi(z_1)|&=&\left|\int_{z_1}^{z_2}p'(z)\,dz+\overline{\int_{z_1}^{z_2}q'(z)\,dz}\right|\\
&\le & \int_{z_1}^{z_2}(|p'(z)| +|q'(z)| )\,|dz| \\
&\le & |z_2-z_1|\, \sup_{z\in\ID}\big (|p'(z)|+ |q'(z)|\big)
= A|z_2-z_1|.
\eeqq
Taking into account of the above estimations for $z_1,z_2\in\gamma_\rho$ and $z_1\ne z_2$, we obtain that
\beqq
|F(z_2)-F(z_1)|&\ge &|f(rz_2)-f(rz_1)|-\varepsilon |\varphi(z_2)-\varphi(z_1)|\\
&\ge & |z_2-z_1|\big (rm(0)C(\rho r)-\varepsilon A \big )\\
&>&|z_2-z_1|\big (rm(0)C(r)-\varepsilon A \big ),
\eeqq
where the last inequality is a consequence of the monotonicity of $C(r)$ (see Lemma \ref{GPS2-th2n1}).

Therefore if $\varepsilon<m(0)C(r) (r/A)$, then $|F(z_2)-F(z_1)|>0$ for all $z_1,z_2\in\gamma_\rho$,  $z_1\ne z_2$, and
thus, for every $\rho\in(0,1)$, $F$ maps $\gamma_\rho$ univalently onto a simple closed curve. 
Applying the argument principle (see, for example \cite{Duren:Harmonic}),
we finally conclude that the function $F$ is univalent in $\ID$ and we complete the proof.
\epf

\section{Examples}\label{GPS2-sec4}

To illustrate the validity of Theorems  \ref{GPS2-th1} and \ref{GPS2-th3} consider following examples.

\beg
Let $h\in \mathcal{A}$ and such that ${\rm Re}\,(e^{i\alpha} h'(z))>0$ in $\ID$ for some $\alpha\in\Bbb R$.
Then $h$ is univalent in $\ID$ by the well-known Noshiro-Warschawski condition (cf. \cite[Chapter 2, Theorem 2.16]{Duren:Analytic})
and hence, the harmonic function
$$f_k(z)=h(z)+k\overline{h(z)}
$$
is univalent for any $k\in[0,1)$ as composition of univalent mappings. Note that Theorem \Ref{uni-theo1} is not applicable in this case. Indeed, if, for example, 
$h_0(z)=z+z^2/2$ and $\alpha=0$, then condition of Theorem \Ref{uni-theo1} takes the form
${\rm Re}\big ( e^{i\gamma} h_0'(z)\big)>k|h_0'(z)|$, that is,
$$ {\rm Re}\left ( e^{i\gamma} \frac{1+z}{|1+z|}\right)>k, ~\mbox{ i.e. }~ 
{\rm Re}\, e^{i(\gamma+\theta)} >k,
$$
where $\theta={\rm arg}(1+z)$ takes all values from $(-\pi/2,\pi/2)$. Obviously this condition can be fulfilled for all $z\in\ID$ and some fixed $\gamma$ only for $k=0$.

On the other hand the above function $h$ and the  condition \eqref{GPS2-eq2} of Corollary \ref{GPS2-cor1} with function
$$\phi (w)=-\frac{1}{k}w+\overline{w} 
$$ 
leads to the inequality
$${\rm Re}\big (e^{i\gamma}\big (k-\frac{1}{k}\big )h'(z)\big ) =\left (\frac{1-k^2}{k}\right ){\rm Re}\,(e^{i\alpha}h'(z))>0
$$
which is true for all $z\in\ID$ and  $\gamma=\alpha+\pi$. 
\eeg

The next less trivial example illustrates not only the limitations of the applicability of Theorem \Ref{uni-theo1} but also the utility of Theorem \ref{GPS2-th3}.

\beg
Consider  the function $h_1$ defined in $\ID$ by
$$
h_1(z)=\left (\frac{1+g(z)}{1-g(z)}\right )^2,
$$
where
$$g(z)=\sqrt{1-\frac{2}{1+\sqrt{3-\frac{8z}{(1+z)^2}}}}.
$$ 
Here branches of all square roots are principal.
It is a simple exercise to see that $h_1$ maps $\ID$ conformally and univalently onto $(\IC\setminus(-\infty,-1])\setminus\overline{\ID}$ (see Figure \ref{GPS2-fig}). Note that the function $h_1$ is not close-to-convex in $\ID$.
\begin{figure}
\begin{center}
\includegraphics[scale=1]{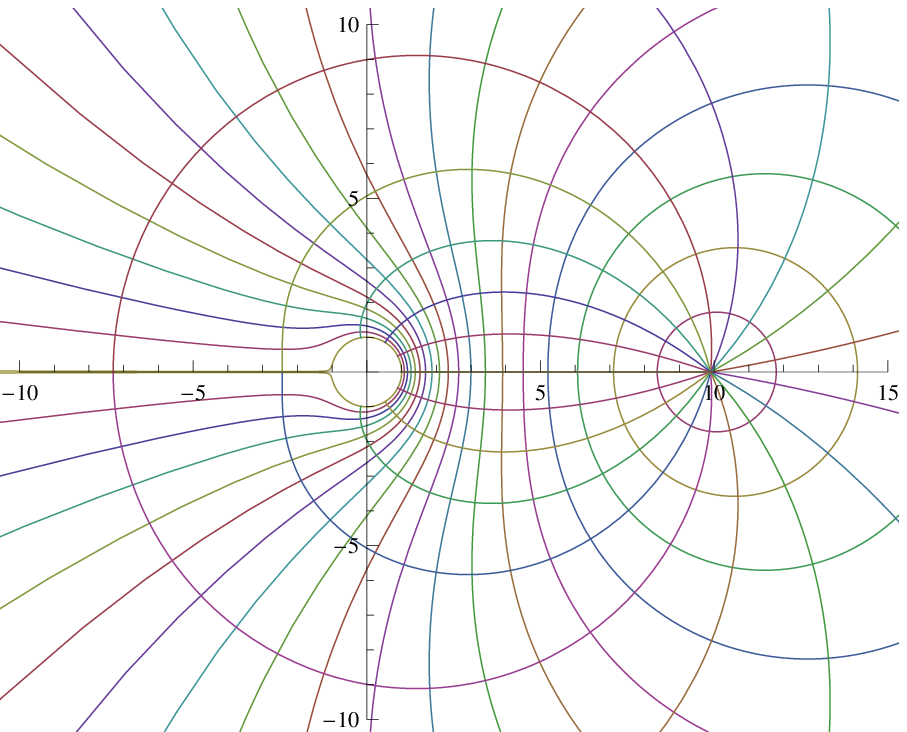}
\end{center}
\caption{Image  of $\ID$ under $h_1(z)$\label{GPS2-fig}}
\end{figure}

Now let $r\in(0,1)$ be fixed and, for $r$ close to $1$, consider the function $h_r(z)$ defined by
$$h_r(z)=h_1(rz).
$$
Let $\epsilon >0$ be a sufficient small number, and define
$$f_\epsilon (z)=h_r(z)+\epsilon\left(h_r(z)+ \overline{z}\right).
$$
In order to prove the univalence of $f_\epsilon$ for all sufficiently small $\epsilon$, we shall
apply Theorem \ref{GPS2-th3} to the function
$$F_\varepsilon(z)=h_r(z)+\varepsilon \overline{z}.
$$
It follows from Theorem \ref{GPS2-th3} that $F_\varepsilon$ is univalent in $\ID$ if
$$|\varepsilon|< r\min\{m(r),m(0)C(r)\}=:\varepsilon_0(r),
$$
where $m(r)=\min\{|h_1'(z)|:\,|z|\le r\}$, $C(r)$ is defined in Lemma \ref{GPS2-th2n1} and $\alpha$ in definition of $C(r)$ is equal to $2$, 
in view of Remark \ref{rem1}. Hence, $f_\epsilon (z)$ is also univalent for each $\epsilon$ with $|\epsilon|<\varepsilon_0(r)/(1-\varepsilon_0(r))$. 

Note that $\varepsilon_0(r)$ approaches $0$ as $r\to 1^-$.
Therefore, functions $f_\epsilon$ tend to $h_1$ uniformly on compact subsets of $\ID$ as $r\to 1^-$.
Hence the domain $f_\epsilon(\ID)$ tends to $h_1(\ID)$  in the sense of convergence to the kernel (cf. \cite[Chapter 2, \S 5]{Golu66})
and the domain $f_\epsilon(\ID)$ is not close-to-convex for $r$ sufficiently close to $1$.
Therefore, Theorem \Ref{uni-theo1} is not applicable in this case while Theorem \ref{GPS2-th3} allows us to state the univalence of $f_\epsilon$.

The univalence of function $f_\epsilon$ in this example can be also proved by Corollary \ref{GPS2-cor1}  with function
$\phi(w)=h_1^{-1}(w)$.
\eeg

\subsection*{Acknowledgements}
The research was supported by the project RUS/RFBR/P-163 under Department of Science \& Technology (India).
The second author is currently on leave from IIT Madras, India. The third author is also supported by 
Russian Foundation for Basic Research (project 14-01-00510) and the Strategic Development Program of Petrozavodsk State University.

\end{document}